\newcommand{\B}[1]{{\mathbf #1}}
\newcommand{\C}[1]{{\mathcal #1}}
\newtheorem{thm}{Theorem}[section]
\newtheorem{thm*}{Theorem}
\newtheorem{claim}[thm]{Claim}
\newtheorem{lem}[thm]{Lemma}
\newtheorem{prop}[thm]{Proposition}
\newtheorem{q*}[thm*]{Question}
\newtheorem{cor*}[thm*]{Corollary}
\theoremstyle{definition}
\newtheorem{defn*}[thm*]{Definition}
\newtheorem{ex}[thm]{Example}
\newtheorem{rem}[thm]{Remark}
\newtheorem{rem*}[thm*]{Remark}
\newtheorem{rems*}[thm*]{Remarks}
\newcommand{\OP}{\operatorname}
\begin{document}

\title[Fragmentation norm]{Fragmentation norm and
relative quasimorphisms}
\author{Michael Brandenbursky}
\address{Ben Gurion University of the Negev, Israel}
\email{brandens@math.bgu.ac.il}
\author{Jarek K\k{e}dra}
\address{University of Aberdeen and University of Szczecin}
\email{kedra@abdn.ac.uk}

\keywords{measure-preserving homeomorphisms, Hamiltonian diffeomorphisms,
fragmentation norm, relative quasimorphisms} 

\subjclass[2000]{Primary 57S05; Secondary 20F65}

\begin{abstract}
We prove that  manifolds with complicated enough fundamental group
admit measure-preserving homeomorphisms which have positive
stable fragmentation norm with respect to balls of bounded measure.
\end{abstract}

\maketitle 

\section{Introduction}\label{S:intro}

Homeomorphisms of a connected manifold $M$ can be often expressed as
compositions of homeomorphisms supported in sets of a given cover of~$M$.  This
is known as the {\em fragmentation property}.  Let $\OP{Homeo}_0(M,\mu)$ be the
identity component of the group of compactly supported measure-preserving
homeomorphisms of $M$. In this paper we are interested in groups $G\subseteq
\OP{Homeo}_0(M,\mu)$ consisting of homeomorphisms which satisfy the
fragmentation property with respect to topological balls of measure at most
one.  When $G$ is a subgroup of the identity component $\OP{Diff}_0(M, \mu)$ 
of the group of compactly supported volume-preserving diffeomorphisms,
we consider the fragmentation property with respect to smooth balls of volume
at most one.  Given $f\in G$, its {\em fragmentation norm} $\|f\|_{\text{\sc
frag}}$ is defined to be the smallest $n$ such that $f=g_1\cdots g_n$ and each
$g_i$ is supported in a ball of measure at most one.  Thus the fragmentation
norm is the word norm on $G$ associated with the generating set consisting of
maps supported in balls as above. We are also interested in the stable
fragmentation norm defined by $ \displaystyle{\lim_{k\to \infty}
\frac{\|f^k\|}{k}.}$ The existence of an element with positive stable
fragmentation norm implies that the diameter of the fragmentation norm is
infinite.  We say that the fragmentation norm on $G$ is {\em stably unbounded}
if $G$ has an element with positive stable fragmentation norm.  In general, a
group $G$ is called stably unbounded if it admits a stably unbounded
conjugation invariant norm; see Section \ref{S:preliminaries} for details.

The main result of the paper (Theorem \ref{T:general})  provides conditions
under which the existence of an essential homogeneous quasimorphism (i.e., a
quasimorphism which is not a homomorphism) on the fundamental group of $M$
implies the existence of an element of $G$ with a positive stable fragmentation
norm. Then we specify this abstract result to the following concrete cases.

{\bf Standing assumption.} Throughout the paper we consider manifolds $M$
such that the evaluation map
$\OP{ev}_x\colon\OP{Homeo}_0(M,\mu)\to M$, given by $\OP{ev}_x(f) = f(x)$ induces
the trivial homomorphism on fundamental groups. 
The only exception is Section \ref{SSS:closed} where we consider general
closed symplectic manifolds. The assumption is satisfied if, for example,
$M$ is either non-compact and connected or if the center of the fundamental
group $\pi_1(M)$ is trivial.  

\begin{thm}[Homeomorphisms]\label{T:homeo}
Let $M$ be a complete Riemannian manifold.
Let $\mu$ be the measure whose value on any smooth bounded set equals to its volume. 
Let $G$ be the kernel of the flux homomorphism $\OP{Homeo}_0(M,\mu)\to H_1(M,\B R)$. If $\pi_1(M)$ 
admits an essential quasimorphism then the fragmentation norm
on $G$ is stably unbounded.
%\hfill {\tt [See~Section~\ref{SS:homeo}]}
\end{thm}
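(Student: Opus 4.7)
The plan is to obtain Theorem~\ref{T:homeo} as an instance of the abstract Theorem~\ref{T:general}, so the work consists in producing the ingredient that Theorem~\ref{T:general} asks for: a real-valued function on $G$ that is a homogeneous quasimorphism, is bounded on elements supported in a topological ball of measure at most one, and takes a nonzero value somewhere. Given such a function, the triangle inequality for the quasimorphism defect immediately forces a lower bound of the form $|\Phi(f^k)|\le D(\Phi)\cdot\|f^k\|_{\text{\sc frag}}+\text{const}$, and linear growth of $\Phi$ on iterates translates into linear growth of the fragmentation norm, yielding stable unboundedness.

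To construct $\Phi$ from an essential homogeneous quasimorphism $\phi\colon\pi_1(M)\to\B R$, I would proceed by the orbit-loop construction. Fix a measurable system of short paths $\alpha_{x,y}$ from $x$ to $y$ in $M$ (well-defined up to homotopy only locally). For $f\in\OP{Homeo}_0(M,\mu)$, pick an isotopy $\{f_t\}$ from the identity to $f$; for each $x\in M$ the trajectory $t\mapsto f_t(x)$ together with a choice of return path $\alpha_{f(x),x}$ yields a loop whose homotopy class I write $\gamma(f,x)\in\pi_1(M,x)$. After trivializing $\pi_1$ via the standing assumption on $\OP{ev}_x$, the value $\phi(\gamma(f,x))$ is independent of the trivialization up to bounded error. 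Define
\[
\Phi(f)\;=\;\int_M \phi\bigl(\gamma(f,x)\bigr)\,d\mu(x),
\]
and then homogenize. Compact support of $f$ keeps the integrand supported in a bounded set, so the integral converges.

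The two technical checks are: (i) independence of the isotopy, modulo bounded error, and (ii) boundedness on ball-supported elements. For (i) two different isotopies from $\OP{id}$ to $f$ differ by a loop in $\OP{Homeo}_0(M,\mu)$, whose effect on each orbit loop is the class of $t\mapsto h_t(x)$ for a loop $\{h_t\}$; using the standing assumption that $\OP{ev}_x$ is trivial on $\pi_1$, such classes are null, and the kernel-of-flux hypothesis is precisely what kills the residual homological obstruction that arises after integration over $M$. For (ii) if $g\in G$ is supported in a ball $B$ of measure at most one, choose the isotopy also supported in $B$; outside $B$ the orbit loop is trivial, and inside the contractible ball $B$ the loops $\gamma(g,x)$ are nullhomotopic in $B$, so their classes in $\pi_1(M)$ are trivial and $\Phi(g)=0$. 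The quasimorphism inequality $|\Phi(fg)-\Phi(f)-\Phi(g)|\le C$ follows by comparing the orbit loops of $fg$, $f$, $g$ and using that $\phi$ is a quasimorphism.

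The main obstacle I expect is handling exactly this well-definedness issue: making the assignment $x\mapsto\gamma(f,x)$ genuinely a measurable function into a space where $\phi$ makes sense, and controlling the error introduced by the auxiliary path system. This is where the kernel-of-flux condition is essential: the flux measures precisely the cohomology class of the $1$-form $x\mapsto \phi(\gamma(f,x))$-data modulo coboundaries, and restricting to its kernel removes the ambiguity up to a bounded quantity. Once $\Phi$ is in place, essentiality of $\phi$ on $\pi_1(M)$ furnishes an element $\alpha\in\pi_1(M)$ with $\phi(\alpha^k)$ growing linearly, and a compactly supported measure-preserving homeomorphism realizing a translation along (a representative of) $\alpha$ on a piece of positive measure provides an $f\in G$ with $\Phi(f^k)$ growing linearly in $k$, completing the argument via Theorem~\ref{T:general}.
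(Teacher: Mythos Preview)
Your overall strategy matches the paper's, but there are two genuine gaps and one misattribution of a hypothesis.

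\textbf{The integral is not a quasimorphism.} You assert $|\Phi(fg)-\Phi(f)-\Phi(g)|\le C$ for a universal constant. This fails when $\mu(M)=\infty$, which is precisely the new case (for finite volume the result is Polterovich's). The pointwise bound $|\psi([(fg)_x])-\psi([f_{g(x)}])-\psi([g_x])|\le D_\psi$ holds, but it must be integrated over the set where the integrand is nonzero, and that set is only controlled by the supports of the isotopies. The paper's Lemma~\ref{L:psi} proves only a \emph{relative} quasimorphism inequality,
\[
|\Psi(fg)-\Psi(f)-\Psi(g)|\le D_\psi\,\min\{\|f\|_{\text{\sc frag}},\|g\|_{\text{\sc frag}}\},
\]
and this is the content of Theorem~\ref{T:general}: one extracts linear lower bounds on the fragmentation norm from a relative quasimorphism whose homogenization is nonzero, via the Lipschitz estimate of Lemma~\ref{L:lipschitz}. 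Your sentence ``the triangle inequality for the quasimorphism defect immediately forces\ldots'' does not apply, because there is no uniform defect.

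\textbf{Your witness is not in $G$.} A push map $f_{\widehat{\alpha}}$ along a homologically nontrivial loop has nonzero flux, so it does not lie in the kernel of flux. Producing an element of $G$ on which $\overline{\Psi}$ is nonzero uses the \emph{essentiality} of $\psi$, not merely $\psi(\alpha)\neq 0$: the paper chooses $\alpha,\beta$ with $\psi(\alpha\beta)\neq\psi(\alpha)+\psi(\beta)$, builds both push maps, and shows $\overline{\Psi}(f_{\widehat{\alpha}}f_{\widehat{\beta}})\neq\overline{\Psi}(f_{\widehat{\alpha}})+\overline{\Psi}(f_{\widehat{\beta}})$. Restricted to homeomorphisms supported in any fixed finite-measure set, $\overline{\Psi}$ is then a genuine homogeneous quasimorphism that is not a homomorphism, hence cannot vanish on the commutator subgroup, which is contained in $G$.

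\textbf{Role of the flux kernel.} The kernel-of-flux hypothesis is not what makes $\Psi$ well defined; independence of the isotopy comes from the standing assumption that the evaluation map is $\pi_1$-trivial. The reason for restricting to the flux kernel is that this is the subgroup of $\OP{Homeo}_0(M,\mu)$ known to have the fragmentation property with respect to $\C B$ (Fathi), so that the fragmentation norm is defined there at all.

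A minor point: your claim that $\Phi(g)=0$ for ball-supported $g$ is too strong. The based loop $\gamma_x\cdot\{g_t(x)\}\cdot\overline{\gamma_{g(x)}}$ need not be null in $\pi_1(M,z)$, since the geodesic legs leave the ball. What is true (and what the proof of Lemma~\ref{L:lipschitz} uses) is that $[(g^p)_x]$ has word length bounded independently of $p$ by Milnor--Schwarz, so $\overline{\Psi}(g)=0$.
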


\begin{rem}
Let $\left[\OP{Homeo}_0(M,\mu),\OP{Homeo}_0(M,\mu)\right]$ be the
commutator subgroup of the identity component of the group of compactly
supported measure-preserving homeomorphisms of $M$. 
If $\dim M\geq 3$, then the commutator subgroup 
$\left[\OP{Homeo}_0(M,\mu),\OP{Homeo}_0(M,\mu)\right]$ is equal to the
kernel of flux, see \cite[Main Theorem]{MR584082}. If $\dim M=2$, then there is only
an inclusion and the equality is an open problem to the best of our knowledge.
For more information about the flux homomorphism see \cite[Section 3]{MR1445290}. 
\end{rem}

\begin{thm}[Diffeomorphisms]\label{T:diff}
Let $M$ be a complete Riemannian manifold equipped with a volume form $\mu$.
Let $G$ be the commutator subgroup of the identity component $\OP{Diff}_0(M,\mu)$ 
of the group of compactly supported volume-preserving diffeomorphisms of $M$. If
$\pi_1(M)$ admits an essential quasimorphism then the
fragmentation norm on $G$ is stably unbounded.
%\hfill {\tt [See~Section~\ref{SS:diff}]}
\end{thm}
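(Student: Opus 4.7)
The plan is to obtain Theorem \ref{T:diff} as a direct specialization of the general Theorem \ref{T:general}. Two standard ingredients from the smooth theory enable this. First, smooth fragmentation: every element of $\OP{Diff}_0(M,\mu)$ decomposes as a product of diffeomorphisms supported in smooth embedded balls of volume at most one, obtained from the classical smooth fragmentation lemma combined with subdivision of supports to control the volume. Second, by Banyaga's theorem, the commutator subgroup $G=[\OP{Diff}_0(M,\mu),\OP{Diff}_0(M,\mu)]$ coincides with the kernel of the flux homomorphism $\OP{Flux}\colon\OP{Diff}_0(M,\mu)\to H^{n-1}(M,\B R)/\Gamma$. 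These two facts place us exactly in the framework of Theorem \ref{T:general}.

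Given an essential homogeneous quasimorphism $\psi\colon\pi_1(M)\to\B R$, Theorem \ref{T:general} produces a homogeneous quasimorphism $\Psi\colon\OP{Diff}_0(M,\mu)\to\B R$, with some defect $D$, which vanishes on any diffeomorphism supported in a ball of volume at most one. Concretely, $\Psi(f)$ is $\psi$ evaluated on the loop traced by the orbit of a basepoint $x$ under an isotopy from $\OP{id}$ to $f$, closed back to $x$ by a short geodesic segment; the standing assumption makes $\Psi$ independent of the isotopy chosen. The defect bound gives $|\Psi(f)|\leq D\,\|f\|_{\text{\sc frag}}$, and homogeneity then yields $\|f^k\|_{\text{\sc frag}}\geq k|\Psi(f)|/D$.

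To conclude, I would exhibit one $f\in G$ with $\Psi(f)\neq 0$. Pick a smooth based loop $\gamma$ in $M$ with $\psi([\gamma])\neq 0$ and, by passing to $\gamma^N$ if necessary, assume $|\psi([\gamma])|>D$. Using a compactly supported divergence-free vector field concentrated near $\gamma$, build $f_0\in\OP{Diff}_0(M,\mu)$ whose evaluation loop at $x$ is exactly $\gamma$, so that $\Psi(f_0)=\psi([\gamma])$. The flux of $f_0$ may be nontrivial; however, by smooth fragmentation one can write a fixed diffeomorphism $h$ with $\OP{Flux}(h)=-\OP{Flux}(f_0)$ as a product of finitely many ball-supported maps. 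Setting $f=f_0 h$ places $f$ in $\ker(\OP{Flux})=G$. Since $\Psi$ vanishes on each ball-supported factor of $h$ and has bounded defect, $|\Psi(f)-\Psi(f_0)|$ is bounded by a constant depending only on the word length of $h$, hence $\Psi(f)\neq 0$. Combined with the lower bound above, this gives $f\in G$ with positive stable fragmentation norm.

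The main obstacle lies inside Theorem \ref{T:general} itself---establishing that $\Psi$ is well-defined, homogeneous, and a quasimorphism relative to the set of ball-supported diffeomorphisms; the present proof treats this as a black box. The auxiliary point that must be checked in the diffeomorphism setting is that the flux-killing correction $h$ can be produced with a fragmentation length bounded independently of $k$, so that applying the bound to $f^k$ preserves the linear growth in $k$ and thereby witnesses stable unboundedness of the fragmentation norm on $G$.
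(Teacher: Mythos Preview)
Your proposal contains two substantive gaps, and it also misreads the definition of $\Psi$.

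\textbf{The definition of $\Psi$.} In the paper, $\Psi(f)=\int_M \psi([f_x])\,\mu$ is an \emph{integral} over all $x\in M$, not the value of $\psi$ on the orbit loop of a single basepoint. Consequently $\Psi$ is not a genuine quasimorphism with a fixed defect $D$; it is only a \emph{relative} quasimorphism, with $|\Psi(fg)-\Psi(f)-\Psi(g)|\le D_\psi\min\{\|f\|_{\text{\sc frag}},\|g\|_{\text{\sc frag}}\}$. The Lipschitz bound $|\overline\Psi(f)|\le 3D_\psi\|f\|_{\text{\sc frag}}$ does hold, but Theorem~\ref{T:general} does not hand you a nonzero $\overline\Psi$ on $G$: its hypothesis is precisely that $\overline\Psi|_G\not\equiv 0$, and this is what you must verify.

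\textbf{The flux-correction step fails.} You propose to take $f_0$ with $\overline\Psi(f_0)\neq 0$, then choose $h$ with $\OP{Flux}(h)=-\OP{Flux}(f_0)$ and write $h$ as a product of ball-supported maps. But for $\dim M\ge 2$, every volume-preserving diffeomorphism supported in a smooth ball $B$ has \emph{zero} flux: an isotopy inside $B$ produces a closed $(n-1)$-form compactly supported in $B$, and $H^{n-1}_c(B)=0$, so the flux class vanishes in $H^{n-1}(M)$. Hence any product of ball-supported maps lies in $\ker(\OP{Flux})=G$ already, and no such product can realise a nonzero flux. Your $h$ therefore cannot be fragmented as you describe, and the correction argument collapses.

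\textbf{How the paper proceeds instead.} The paper never tries to correct the flux of a single push map. It uses the \emph{essentialness} of $\psi$ (which your argument never invokes): one picks $\alpha,\beta\in\pi_1(M)$ with $\psi(\alpha\beta)\neq\psi(\alpha)+\psi(\beta)$, builds the smooth push maps $f_{\widehat\alpha},f_{\widehat\beta}$, and shows that $\overline\Psi(f_{\widehat\alpha}f_{\widehat\beta})\neq\overline\Psi(f_{\widehat\alpha})+\overline\Psi(f_{\widehat\beta})$. Since $\overline\Psi$ restricted to diffeomorphisms supported in any fixed finite-measure set is a genuine homogeneous quasimorphism, the fact that it is not a homomorphism forces it to be nonzero on the commutator subgroup $G$. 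Then Theorem~\ref{T:general} applies. Note that this is exactly where essentialness enters: were $\psi$ a homomorphism, $\overline\Psi$ would be one as well and would vanish on $G$, so your approach---which only uses $\psi(\gamma)\neq 0$---cannot succeed without an additional idea.
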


\begin{thm}\label{T:ham}
Let $(M,\omega)$ be a symplectic manifold (satisfying the standing assumption as above). 
If $\pi_1(M)$ admits an essential quasi-morphism
then the fragmentation norm on the group $\OP{Ham}(M,\omega)$ of compactly supported 
Hamiltonian diffeomorphisms of $M$ is stably unbounded.
%\hfill {\tt [See~Section~\ref{SS:ham}]}
\end{thm}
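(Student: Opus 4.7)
The plan is to invoke Theorem \ref{T:general} for the group $G = \OP{Ham}(M,\omega)$, viewed as a subgroup of $\OP{Homeo}_0(M,\mu)$ where $\mu = \omega^n/n!$ is the Liouville measure. The standing assumption descends from $\OP{Homeo}_0(M,\mu)$ to $\OP{Ham}(M,\omega)$, and the generating set of the Hamiltonian fragmentation norm (Hamiltonian diffeomorphisms supported in symplectic balls of $\mu$-measure at most one) sits inside the generating set of the measure-preserving fragmentation norm on $\OP{Homeo}_0(M,\mu)$.

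The homogeneous quasimorphism $\widetilde\phi$ produced by Theorem \ref{T:general} from an essential quasimorphism $\phi$ on $\pi_1(M)$ is uniformly bounded by some constant $C$ on measure-preserving ball-supported homeomorphisms. Its restriction to $\OP{Ham}(M,\omega)$ is therefore bounded by the same constant on Hamiltonian ball-supported generators, so for any $f \in \OP{Ham}(M,\omega)$ with $\widetilde\phi(f) \neq 0$ and any Hamiltonian fragmentation $f^k = g_1\cdots g_n$, homogeneity of $\widetilde\phi$ together with the quasimorphism inequality forces $n \geq k|\widetilde\phi(f)|/C - O(1)$; hence $\|f^k\|_{\text{\sc frag}}$ grows linearly in $k$ and the stable fragmentation norm of $f$ is positive.

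It remains to exhibit $f \in \OP{Ham}(M,\omega)$ with $\widetilde\phi(f) \neq 0$. Pick a loop $\gamma$ with $\phi([\gamma]) \neq 0$. On a Weinstein tubular neighborhood of $\gamma$, construct a compactly supported time-dependent Hamiltonian whose time-$1$ flow rigidly translates a small positive-measure subset once around $\gamma$; for such a point $x$, the trajectory $t \mapsto \phi_t(x)$ closed up by a reference path is homotopic to $\gamma$, and the construction of $\widetilde\phi$ via integrating $\phi$ along trajectories then produces a nonzero value proportional to $\phi([\gamma])$. The main technical obstacle is this explicit Hamiltonian construction: a product-type Hamiltonian in the angular action variable on the $S^1$-bundle model of the tubular neighborhood achieves the translation, but one must choose the cutoff so the Hamiltonian remains compactly supported inside the neighborhood while a full wrap-around of $\gamma$ survives on a set of positive measure.
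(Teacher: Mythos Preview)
There is a real gap in your third paragraph: a compactly supported Hamiltonian push around a \emph{single} loop $\gamma$ always has $\overline{\Psi}(f)=0$, so your candidate element detects nothing. In a Weinstein tube $\B S^1\times(-1,1)\times\B D^{2n-2}$ with form $dx\wedge dy+\omega_0$ and Hamiltonian $H=H(y,z)$, the $\partial_x$-speed is $\partial H/\partial y$ while $y$ is conserved. Every orbit of $f^p$ stays in the tube, so $[(f^p)_x]=\gamma^{w_p(x)}$ for some integer winding $w_p(x)$; since $\psi$ is homogeneous on the cyclic subgroup $\langle\gamma\rangle$ one gets
\[
\overline{\Psi}(f)=\psi(\gamma)\int_{\text{tube}}\lim_{p\to\infty}\frac{w_p(x)}{p}\,d\mu
=\psi(\gamma)\int_{\text{tube}}\frac{\partial H}{\partial y}\,d\mu=0,
\]
the last integral vanishing because $H$ has compact support in $y$. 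The same cancellation persists for time-dependent $H$: it is precisely the vanishing of the flux of a Hamiltonian isotopy paired with the dual class of $\gamma$. Notice that your argument only uses $\psi(\gamma)\neq 0$ and never invokes essentiality; and indeed, were $\psi$ a homomorphism, $\overline{\Psi}$ would factor through the flux and vanish identically on $\OP{Ham}(M,\omega)$.

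The paper circumvents this by first working on the larger group $\OP{Symp}_0(M,\omega)$. It restricts the vector field $X_H$ to the region where $\partial H/\partial y>0$ and extends by zero; the resulting flow is symplectic but \emph{not} Hamiltonian, and now $\overline{\Psi}(f_{\widehat\alpha})$ is close to $\psi(\alpha)\,\mu(\OP{supp}(f_{\widehat\alpha}))\neq 0$. Choosing $\alpha,\beta\in\pi_1(M)$ with $\psi(\alpha\beta)\neq\psi(\alpha)+\psi(\beta)$ --- here essentiality is finally used --- shows that $\overline{\Psi}$ is not a homomorphism on $\OP{Symp}_0(M,\omega)$, hence is nontrivial on its commutator subgroup $\OP{Ham}(M,\omega)$, and Theorem~\ref{T:general} then finishes the proof.
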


\begin{rem}
Theorem \ref{T:ham} holds true when $(M,\omega)$ is either a general open
symplectic manifold or a closed symplectic manifold satisfying the standing
assumption. The statement also remains true for a general closed symplectic
manifold with a slightly modified fragmentation norm. We discuss the details
in Section \ref{SSS:closed}.
\end{rem}

\subsection*{Examples}
The following are simple examples of manifolds for which the stable unboundedness
of the fragmentation norm is a new result.

\begin{ex}\label{E:R3-L}
Let $M= \B R^{3} \setminus (L_1\cup L_2)$, where $L_i$ are disjoint lines.
Then the commutator subgroup of $\OP{Diff}_0(M,\mu)$ has stably unbounded fragmentation
norm, and in particular is stably unbounded. 
\hfill $\diamondsuit$
\end{ex}

\begin{ex}\label{E:R4-L}
Let $M=\B R^{4}\setminus (P_1\cup P_2)$, where $P_i$ are disjoint planes.
Suppose that $M$ is equipped with the standard symplectic form $\omega$ induced
from $\B R^4$. Then $\OP{Ham}(M,\omega)$ has stably unbounded fragmentation norm. 
\hfill $\diamondsuit$
\end{ex}

\begin{ex}\label{E:KB}
Let $M$ be the Klein Bottle with a point removed equip\-ped with
a measure from Theorem \ref{T:homeo}. Then for $k>0$ the group of
compactly supported measure-preserving homeomorphisms generated
by maps supported in balls of area at most $k$ has
stably unbounded fragmentation norm.
\hfill $\diamondsuit$
\end{ex}

\begin{rem}
Lanzat \cite{MR3142258} and Monzner-Vichery-Zapolsky \cite{MR2968955} showed
that the fragmentation norm on groups of Hamiltonian diffeomorphisms of certain
non-compact symplectic manifolds is stably unbounded.  Stable unboundedness of
the Hofer norm on the group $\OP{Ham}(\B R^2\setminus\{x,y\})$ was proved by
Polterovich-Sieburg \cite{MR1764319}. Stable unboundedness of
the fragmentation norm on the group $\OP{Ham}(\B R^2\setminus\{x,y\})$ follows from the results from
Monzner-Vichery-Zapolsky \cite{MR2968955}, as well as from our Theorem \ref{T:ham}.
For $M$ compact Theorem \ref{T:diff} was proven by Polterovich, see
\cite[Section 3.7]{MR2276956}.  
\end{rem}

\begin{rem}
Unboundedness of a group is usually proven with the use of unbounded
quasimorphisms. When $M$ is not of finite measure it is not known whether its groups of
transformations admit unbounded quasimorphisms.  Our proofs involve a
construction of relative quasimorphisms on groups in question.  We would like to
note that several constructions of relative quasimorphisms in symplectic
geometry appeared in \cite{MR2208798, MR2968955, kawasaki}.
\end{rem}

\subsection*{Acknowledgments}
We would like to thank Lev Buhovsky, Yaron Ostrover and Leonid Polterovich for
helpful discussions.  We would like to thank the referees for a thorough reading of our paper
and for very useful suggestions. 

We thank the Center for Advanced Studies in Mathematics
at Ben Gurion University for supporting the visit of the second author at BGU.
This work was funded by a Leverhulme Trust Research Project Grant
RPG-2017-159.

\section{Preliminaries}\label{S:preliminaries}

\subsection{Definitions}\label{SS:definitions}
Let $G$ be a group. A function  $\nu\colon G\to [0,\infty)$ is called a {\em
conjugation-invariant norm} if it satisfies the following conditions for all
$g,h\in G$:
\begin{enumerate}
\item
$\nu(g)=0$ if and only if $g=1_G$
\item
$\nu(g)=\nu(g^{-1})$
\item
$\nu(gh)\leq \nu(g) + \nu(h)$
\item
$\nu(ghg^{-1})=\nu(h)$.
\end{enumerate}
Let $\psi\colon G\to \B R$ be a function.
The {\em stabilization}
of $\psi$ is a function $\overline{\psi}\colon G\to \B
R$ defined by
$$
\overline{\psi}(g)=\lim_{n\to \infty}\frac{\psi\left( g^n \right)}{n},
$$
provided that the above limit exists for all $g\in G$. 
Note that for a norm $\nu$ its stabilization always exists 
because $\nu$ is a non negative subadditive function.

A norm $\nu$ is called {\em stably unbounded} if there exists $g\in G$ with
positive stabilization: $\overline{\nu}(g)>0$.  A group $G$ is called (stably)
{\em unbounded} if it admits a (stably) unbounded conjugation-invariant norm.
For more information about these notions see \cite{MR2509711}.  

A function $\psi\colon G\to\B R$ is called a {\em quasimorphism}
if there exists a real number $C\geq 0$ such that
$$
|\psi(gh) - \psi(g) - \psi(h)|\leq C
$$
for all $g,h\in G$. The infimum of such $C$'s is called the
\emph{defect} of $\psi$ and is denoted by $D_\psi$.
A quasimorphism $\psi$ is called {\em homogeneous} if
$$
\psi(g^k)=k\psi(g)
$$ 
for all $k\in \B Z$ and all $g\in G$. 
The stabilization of a quasimorphism exists and is a homogeneous quasimorphism \cite{MR2527432}.
A homogeneous quasimorphism is called {\em essential} if it is not a homomorphism.

A function $\psi: G\to \B R$ is called a {\em relative quasimorphism} with
respect to a conjugation-invariant norm $\nu$ if there exists a positive
constant $C$ such that for all $g,h\in G$
$$
|\psi(gh)-\psi(g)-\psi(h)|\leq C\min\{\nu(g),\nu(h)\}.
$$ 

For more information about quasimorphisms and their connections to different
branches of mathematics, see \cite{MR2527432}.

\subsection{The setup and assumptions}\label{SS:setup}
\hfill

{\bf (1)}
Let $M$ be a smooth complete Riemannian manifold.
Let $\C B$ be the set of all subsets
$B\subset M$ which are homeomorphic to the $n$-dimensional Euclidean unit 
open ball 
$$B^n=\{(x_1,\ldots,x_n)\in \B R^n\ |\ x_1^2+\cdots+x_n^2<1\}$$
and of measure at most one:
$$
\C B=\{B\subset M\ |\ B\cong B^n\ \text{and } \mu(B)\leq 1\}.
$$
Notice that the group of all measure-preserving homeomorphisms of $M$
acts on the set $\C B$. Let $\C B'$ be the set of all subsets
$B\subset M$ which are diffeomorphic to the $n$-dimensional Euclidean unit 
open ball $B^n\subset \B R^n$ and of volume at most one. 
Analogously, the group of all volume-preserving diffeomorphisms of $M$
acts on the set $\C B'$.

{\bf (2)} Let $G$ be a subgroup of $\OP{Homeo}_0(M,\mu)$. 
We assume that $G$ has the {\em fragmentation property} with respect
to the family $\C B$ (respectively the family $\C B'$ if it is a subgroup
of $\OP{Diff}_0(M,\mu)$). This means that for every $f\in G$ there exist
$g_1,\cdots,g_n\in G$ such that
\begin{enumerate}
\item $f=g_1\cdots g_n$ and
\item ${\rm supp}(g_i)\subset B_{g_i}\in \C B$ (respectively $\C B'$).
\end{enumerate}

{\bf (3)}
The {\em fragmentation norm} on $G$ associated with $\C B$ (respectively $\C
B'$) is defined by 
$$
\|f\|_{\text{\sc frag}} = \min\{n\in \B N\ |\ f = g_1\cdots g_n\},
$$
where $g_i$'s are as in the previous item. Notice that it is
a conjugation invariant norm.

{\bf (4)} 
Let $z\in M$ be a base-point. For each $x\in M$ let $\gamma_x\colon [0,1]\to M$
be a shortest geodesic path from $z$ to $x$. Such a geodesic is unique for all
$x$ away from the cut-locus (which is a set of measure zero \cite[Lemma
III.4.4]{MR1390760}) and on this set we also have that $\gamma_x$ depend
continuously on $x$.  Moreover, the paths $\gamma_x$ are of bounded lengths on
compact subsets, i.e., for each compact subset $K\subset M$ there exists a
constant $C_K>0$ such that $\ell(\gamma_x)\leq C_K$ for every $x\in K$. Here
$\ell(\gamma)$ denotes the Riemannian length of a path $\gamma$.  

{\bf (5)}
Let $\mu$ be a Lebesgue measure on $M$, i.e., a measure whose value on any
smooth bounded set is equal to its volume, and let $G\subseteq
\OP{Homeo}_0(M,\mu)$ be a subgroup
of the identity component of the group of compactly-supported mea\-sure-preserving
homeomorphisms of $M$.

{\bf (6)}
Let ${\rm ev_z}\colon G\to M$ be the {\em evaluation} map defined by
${\rm ev_z}(f)=f(z)$.  We say that $G$ has {\em trivial evaluation} if the homomorphism 
$${\rm ev_z}_*\colon \pi_1(G,1)\to \pi_1(M,z)$$
is trivial. For example, this is the case if $\pi_1(M,z)$ has trivial center because the 
image of ${\rm ev_z}_*$ lies in the center of $\pi_1(M,z)$, see e.g. \cite{MR32:6454}.
Observe that the above triviality condition is independent of the choice of the
basepoint. It follows that
if $M$ is non-compact then the above condition is always satisfied. 
%If $G$ is
%a subgroup of the group $\OP{Ham}(M,\omega)$ of Hamiltonian diffeomorphisms
%then the evaluation is also trivial which follows from the proof of
%Arnold's conjecture \cite[Exercise 11.28]{MR2000g:53098}.

{\bf (7)}
Let ${\psi}\colon \pi_1(M,z)\to \B R$ be a nontrivial homogeneous quasimorphism.
Let $f\in G$, and let $\{f_t\}$ be an isotopy from the identity to $f$. 
We take $f_t\in \OP{Homeo}_0(M,\mu)$.
Now we define $\Psi\colon G\to \B R$ by
$$
\Psi(f) = \int_M {\psi}([f_x]) \mu,
$$
where $f_x$ is a loop represented by the  concatenation 
$\gamma_x\cdot \{f_t(x)\}\cdot \overline{\gamma_{f(x)}}$.  
Note that if $\OP{Homeo}_0(M,\mu)$ 
has trivial evaluation then $[f_x]$ does not depend on the isotopy $\{f_t\}$.
Hence in this case $[f_x]$ also does not depend on the isotopy $\{f_t\}$.
Thus in order to show that $\Psi(f)$ is well-defined it is enough to show that $\Psi(f)<\infty$ is for each $f$.
Note that this construction appeared before in case $M$ is compact and $G=\OP{Diff}_0(M,\mu)$ in \cite{MR2276956}.

\begin{prop}\label{P:integrable}
The function $\Psi\colon G\to\B R$ is well-defined.
\end{prop}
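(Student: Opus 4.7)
The plan is to show that $\psi([f_x])$ vanishes outside a compact set $K \subset M$ and attains only finitely many values on $K$, so that $\Psi(f)$ is the integral of a bounded function over a set of finite measure. Choose a compactly supported isotopy $\{f_t\}$ from the identity to $f$ and let $K \supset \bigcup_t \OP{supp}(f_t)$ be compact. For $x \notin K$ the motion path $t \mapsto f_t(x)$ is constant at $x$ and $f(x) = x$, so $f_x = \gamma_x \cdot \overline{\gamma_x}$ is null-homotopic and $\psi([f_x]) = 0$; hence $\Psi(f) = \int_K \psi([f_x])\, d\mu$.

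Lift to the universal cover $p\colon \widetilde M \to M$ equipped with the pulled-back Riemannian metric and fix $\widetilde z \in p^{-1}(z)$. Let $\widetilde f_t$ be the lift of $f_t$ with $\widetilde f_0 = \OP{id}$; by uniqueness of lifts, $\widetilde f_t$ commutes with the isometric action of $\pi_1(M, z)$ on $\widetilde M$ by deck transformations. For $x \notin \OP{Cut}(z)$ (a full-measure condition), set $\widetilde x := \widetilde\gamma_x(1)$ and $\widetilde y_x := \widetilde\gamma_{f(x)}(1)$. Lifting the three segments of $f_x$ from $\widetilde z$ shows that $[f_x]$ corresponds to the unique deck transformation $g_x$ satisfying $\widetilde f_1(\widetilde x) = g_x \cdot \widetilde y_x$, in which case the lifted loop ends at $g_x \widetilde z$.

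The function $(\widetilde y, t) \mapsto d_{\widetilde M}(\widetilde f_t(\widetilde y), \widetilde y)$ is continuous and deck-invariant, hence descends to a continuous $F \colon M \times [0,1] \to [0, \infty)$; on the compact set $K \times [0,1]$ it is bounded by some constant $C$. Since $f(K) = K$, setup item (4) gives $d_{\widetilde M}(\widetilde x, \widetilde z),\, d_{\widetilde M}(\widetilde y_x, \widetilde z) \le C_K$ for $x \in K$, and the triangle inequality combined with the fact that $g_x$ acts isometrically yields
$$
d_{\widetilde M}(g_x \widetilde z, \widetilde z) \le d_{\widetilde M}(\widetilde z, \widetilde y_x) + d_{\widetilde M}(\widetilde f_1(\widetilde x), \widetilde x) + d_{\widetilde M}(\widetilde x, \widetilde z) \le C + 2 C_K.
$$
Because the deck action is properly discontinuous, only finitely many $g \in \pi_1(M, z)$ satisfy this bound, so $\{[f_x] : x \in K\}$ is a finite subset of $\pi_1(M, z)$ and $\psi([f_x])$ is bounded on $K$. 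Measurability of $x \mapsto [f_x]$ on the full-measure open set $U \cap f^{-1}(U)$ (where $U = M \setminus \OP{Cut}(z)$) follows from its being locally constant there.

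Thus $\Psi(f)$ is finite and well-defined. The main technical point is verifying that the displacement function descends to a continuous function on $M \times [0,1]$, which rests on the deck-equivariance of $\widetilde f_t$ and the isometric nature of the deck action; the remaining steps are bookkeeping with lifts and the triangle inequality.
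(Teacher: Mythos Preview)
Your proof is correct and reaches the same conclusion as the paper---finiteness of the set $\{[f_x]:x\in M\}$---but by a genuinely different route. The paper argues via fragmentation: it writes $f$ as a product of homeomorphisms $h_i$ supported in balls $B_i$, uses the cocycle identity $[f_x]=[(h_1)_{(h_2\cdots h_n)(x)}]\cdots[(h_n)_x]$, approximates each topological ball by a nearby smooth ball of finite diameter, and then bounds the Riemannian length of a representative of each $[(h_i)_x]$; the Milnor--Schwarz lemma then gives a uniform bound on word length. You instead lift the isotopy to the universal cover, observe that the displacement $d_{\widetilde M}(\widetilde f_t(\cdot),\cdot)$ is deck-invariant and hence descends to a continuous function on $M\times[0,1]$, bound it on the compact set $K\times[0,1]$, and combine this with the geodesic bound $C_K$ and the triangle inequality to bound $d_{\widetilde M}(g_x\widetilde z,\widetilde z)$ directly; proper discontinuity of the deck action then does the work of Milnor--Schwarz. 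Your approach is cleaner in that it avoids fragmentation, the smooth-ball approximation step, and the cocycle bookkeeping; the paper's approach has the minor advantage that it foreshadows the fragmentation machinery used later and does not require explicitly setting up lifts. Both arguments rest on the same completeness hypothesis and on the existence of an isotopy with compact union of supports, so neither is more general.
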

\begin{proof}
Let $f\in G$. It is enough to show that the set $\{[f_x]\}_{x\in M}$ is finite.
Let $\{f_t\}\in\OP{Homeo}_0(M,\mu)$ be an isotopy  from the identity to $f$. 
The union of the supports $\bigcup_{t\in[0,1]}\OP{supp}(f_t)$ is a 
compact subset of $M$. Recall that $M$ admits a complete Riemannian metric.
Hence there exists $r>0$ such that the geodesic ball $B_r(z)$ of 
radius $r$ centered at $z$ contains $\bigcup_{t\in[0,1]}\OP{supp}(f_t)$.
Note that for each $x\in M\setminus B_r(z)$ the element $[f_x]$ 
is trivial in $\pi_1(M, z)$. Hence it is enough to show that the
set $\{[f_x]\}_{x\in B_r(z)}$ is finite in $\pi_1(B_r(z), z)$.

The ball $B_r(z)$ is compact, so we cover it with finite number of balls $B_i$, where each $B_i\in\C B$.
Then $f$ can be written as a product of measure-preserving homeomorphisms $h_i$
such that the support of $h_i$ lies in $B_i$. Since $M$ is a smooth manifold, for each $i$ there 
exits a smooth ball $B'_i$, such that it is $\epsilon$-close to $B_i$ and such that 
it is $\epsilon$-homotopic to $B_i$, see smooth approximation theorem \cite[Theorem 2.11.8]{MR1224675}.
Note that $[f_x]$ satisfies a cocycle condition. It means that
$$[f_x]=[(h_1\circ\ldots\circ h_n)_x]=[(h_1)_{(h_2\circ\ldots\circ h_n)(x)}]\ldots [(h_n)_x].$$
Let $r'$ be such that $B_{r'}(z)$ contains all balls $B_i$, $B'_i$ and the the images of $\epsilon$-homotopies
between them. Hence it is enough to prove that the set $\{([(h_i)_x]\}_{x\in B_i}$ is finite in $\pi_1(B_{r'}(z), z)$.

The ball $B'_i$ is smooth, thus it has finite diameter $d_i$. 
The group of measure-preserving homeomorphisms of a ball is connected, 
see \cite[Proposition 3.8]{MR584082}.
Every path inside $B_i$ can be free $\epsilon$-homotoped  
to a path in $B'_i$ and hence to a path whose Riemannian length is less than the diameter $d_i$. 
Thus $[(h_i)_x]$ can be represented by a path whose Riemannian length 
is less than $d_i+2(r_i+\epsilon)$, where $r_i$ is a radius of a geodesic ball $B_{r_i}(z)$ which contains $B_i$. 
By the Milnor-Schwarz lemma \cite{MR1744486} the word length of $[(h_i)_x]$ is bounded in $\pi_1(B_{r'}(z), z)$ and we are done.
\end{proof}

\subsection{The main technical result}
Before stating the theorem we summarize the assumptions we need.

{\bf Assumptions:}
\begin{itemize}[leftmargin=*]
\item $M$ is an $n$-dimensional complete Riemannian manifold.
\item $\mu$ is a Lebesgue measure on $M$.
\item $\C B$ is the set of topological balls in $M$ of measure
at most $1$.
\item $\C B'$ is the set of smooth balls in $M$ of measure
at most $1$.
\item $\OP{Homeo}_0(M,\mu)$ admits a trivial evaluation.
\item $G\subseteq\OP{Homeo}_0(M,\mu)$, 
$G$ is not a subgroup of $\OP{Diff}_0(M)$, 
$G$ has fragmentation property with respect to $\C B$.
\item $G\subseteq\OP{Diff}_0(M,\mu)$,  
$G$ has fragmentation property with respect to $\C B'$.
\end{itemize}

\begin{thm}\label{T:general}
Let $M$, $\mu$, $\C B$, $\C B'$ and $G$ be as above.
If there exists a homogeneous quasimorphism
$\psi\colon \pi_1(M)\to~\B R$ such that the homogenization $\overline{\Psi}$ is
nonzero then the fragmentation norm on $G$ is stably unbounded.
\end{thm}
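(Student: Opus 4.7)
The plan is to show that $\Psi$ is a relative quasimorphism with respect to $\|\cdot\|_{\text{\sc frag}}$ and is uniformly bounded on single-ball elements; these two facts combine to give $|\Psi(f)|\leq C\|f\|_{\text{\sc frag}}$, and passing to $\overline{\Psi}$ converts the hypothesis $\overline{\Psi}\not\equiv 0$ into stable unboundedness of the fragmentation norm.

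For the first step I would choose isotopies of $fg$ realising the cocycle identity $[(fg)_x]=[g_x]\cdot[f_{g(x)}]$ in $\pi_1(M,z)$. A change of variables using $\mu$-invariance of $g$ gives
$$\Psi(fg)-\Psi(f)-\Psi(g)=\int_M\bigl(\psi([g_x][f_{g(x)}])-\psi([g_x])-\psi([f_{g(x)}])\bigr)\,d\mu.$$
The integrand is bounded pointwise by the defect $D_\psi$ and vanishes outside $\OP{supp}(g)\cap g^{-1}(\OP{supp}(f))$: if $x\notin\OP{supp}(g)$ then $[g_x]=e$ and $[(fg)_x]=[f_x]=[f_{g(x)}]$, while if $g(x)\notin\OP{supp}(f)$ then $[f_{g(x)}]=e$ and $[(fg)_x]=[g_x]$. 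Because $g$ preserves $\mu$, this exceptional set has measure at most $\min(\mu(\OP{supp}(f)),\mu(\OP{supp}(g)))$, which in turn is at most $\min(\|f\|_{\text{\sc frag}},\|g\|_{\text{\sc frag}})$. Hence
$$|\Psi(fg)-\Psi(f)-\Psi(g)|\leq D_\psi\min\bigl\{\|f\|_{\text{\sc frag}},\|g\|_{\text{\sc frag}}\bigr\}.$$

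For the single-ball bound, take $h\in G$ supported in $B\in\C B$ (or $\C B'$) and select an isotopy $\{h_t\}$ also supported in $B$, possible by path-connectedness of the relevant group on the ball \cite[Prop.\ 3.8]{MR584082}. Fix $b\in B$, set $\alpha:=\gamma_b$, and for $x\in B$ choose a path $\delta_x$ from $b$ to $x$ inside $B$; since $B$ is simply connected, $t\mapsto h_t(x)$ is homotopic rel endpoints to $\delta_x^{-1}\cdot\delta_{h(x)}$. Setting $\theta_x:=\gamma_x\cdot\delta_x^{-1}\cdot\alpha^{-1}\in\pi_1(M,z)$ yields
$$[h_x]=\theta_x\cdot\theta_{h(x)}^{-1},$$
and homogeneity $\psi(\theta^{-1})=-\psi(\theta)$ combined with the quasimorphism property give $\psi([h_x])=\psi(\theta_x)-\psi(\theta_{h(x)})+O(D_\psi)$. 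The set $\bigl\{[\theta_x]:x\in B\bigr\}$ is finite by the Milnor--Schwarz argument of Proposition~\ref{P:integrable}, so $\psi\circ\theta$ is bounded and measurable; since $h(B)=B$ and $h$ preserves $\mu$, the two leading integrals cancel and $|\Psi(h)|\leq D_\psi\mu(B)\leq D_\psi$. If $f=h_1\cdots h_k$ is a minimal fragmentation, iterating the relative quasimorphism bound gives $\bigl|\Psi(f)-\sum_i\Psi(h_i)\bigr|\leq(k-1)D_\psi$, whence $|\Psi(f)|\leq 2D_\psi\|f\|_{\text{\sc frag}}$. Applying this to $f^n$ and dividing by $n$ yields $|\overline{\Psi}(f)|\leq 2D_\psi\,\overline{\|f\|_{\text{\sc frag}}}$, so $\overline{\Psi}(f)\neq 0$ forces $\overline{\|f\|_{\text{\sc frag}}}>0$.

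The main obstacle is the single-ball bound: one must choose the isotopy of $h$ supported in $B$ so that $t\mapsto h_t(x)$ remains inside the simply connected ball, invoke the standing assumption of trivial evaluation so that $[h_x]$ is independent of that choice, and arrange the path decomposition so that the cancellation $\int_B\psi(\theta_x)\,d\mu=\int_B\psi(\theta_{h(x)})\,d\mu$ is available. With this in hand only the quasimorphism defect survives, yielding the universal bound $D_\psi$ on $\Psi(h)$.
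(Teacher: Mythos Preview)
Your argument follows the paper's architecture: show $\Psi$ is a relative quasimorphism with respect to $\|\cdot\|_{\text{\sc frag}}$ (Lemma~\ref{L:psi}), control it on single-ball elements, deduce a Lipschitz bound (Lemma~\ref{L:lipschitz}), and conclude. It is essentially correct, with one point to tighten and one genuine variant worth noting.

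The point to tighten is in your first step. You assert that $[g_x]=e$ whenever $x\notin\OP{supp}(g)$, but $g(x)=x$ does not by itself force the isotopy loop $\{g_t(x)\}$ to be nullhomotopic; trivial evaluation only says the class is independent of the isotopy, not that it is trivial. The paper handles this by choosing the isotopy of $g$ as a concatenation of isotopies supported in the balls $B_{i,g}$ (using Fathi's connectedness of $\OP{Homeo}(B,\mu)$), so that $[g_x]=e$ for $x\notin\bigcup_i B_{i,g}$, a set of measure at most $\|g\|_{\text{\sc frag}}$. Your final inequality survives unchanged with this correction.

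Your treatment of the single-ball bound differs from the paper's and is arguably cleaner. The paper proves $\overline{\Psi}(g_i)=0$ by showing that $[(g_i^p)_x]$ is represented by loops of length bounded independently of $p$ (Milnor--Schwarz), so $\psi([(g_i^p)_x])/p\to 0$; combined with the estimate $|\overline{\Psi}-\Psi|\leq D_\psi\|\cdot\|_{\text{\sc frag}}$ this gives the Lipschitz constant $3D_\psi$. You instead exhibit $[h_x]=\theta_x\theta_{h(x)}^{-1}$ as a coboundary and use the change of variables $\int_B\psi(\theta_{h(x)})\,d\mu=\int_B\psi(\theta_x)\,d\mu$ to obtain $|\Psi(h)|\leq D_\psi$ directly, yielding the sharper constant $2D_\psi$ and avoiding any passage through the homogenization at this stage.
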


In order to apply Theorem \ref{T:general} to a concrete group $G$ we need to
verify that the function $\overline{\Psi}\colon G\to \B R$ is nonzero.  It is
done with the use of various {\em push} maps. These applications are presented
in Section \ref{S:applications}.

\section{Proof of Theorem \ref{T:general}}
\begin{lem}\label{L:psi}
The function $\Psi\colon G\to\B R$ is a relative quasimorphism with
respect to the fragmentation norm.  
\end{lem}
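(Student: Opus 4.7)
The plan is to exploit the cocycle identity for the loops $f_x$ together with the quasimorphism defect of $\psi$, and to control the measure of the region where the cocycle error lives by the fragmentation norm.

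First I would establish the cocycle relation. Pick isotopies $\{f_t\}$ and $\{g_t\}$ from the identity to $f$ and $g$, and use the concatenated isotopy $t\mapsto f_t\circ g_t$ (or just first $g_t$, then $f_t\circ g$) to represent $fg$. A direct inspection of the concatenation of the three pieces $\gamma_x$, the trajectory of $x$, and $\overline{\gamma_{fg(x)}}$, after inserting $\overline{\gamma_{g(x)}}\cdot\gamma_{g(x)}$ in the middle, gives
\[
[(fg)_x] \;=\; [g_x]\cdot [f_{g(x)}]
\]
in $\pi_1(M,z)$. (Triviality of the evaluation map ensures the classes on both sides are independent of the chosen isotopies.)

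Next, I would use the defect of $\psi$. With $D=D_\psi$, for every $x\in M$,
\[
\bigl|\psi([(fg)_x]) - \psi([g_x]) - \psi([f_{g(x)}])\bigr|\;\le\; D.
\]
Integrating and using that $g$ is measure-preserving so that $\int_M \psi([f_{g(x)}])\,\mu = \int_M \psi([f_y])\,\mu = \Psi(f)$, I obtain
\[
\bigl|\Psi(fg)-\Psi(f)-\Psi(g)\bigr|\;\le\; \int_M \bigl|\psi([g_x][f_{g(x)}])-\psi([g_x])-\psi([f_{g(x)}])\bigr|\,\mu.
\]

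The key observation, which controls the right-hand side by the fragmentation norm, is that the pointwise integrand vanishes unless \emph{both} $[g_x]$ and $[f_{g(x)}]$ are nontrivial (since $\psi$ is a homogeneous quasimorphism, $\psi(1)=0$, and the cocycle error is zero if either factor is $1$). If $x\notin\operatorname{supp}(g)$ then $g_t(x)\equiv x$ and $[g_x]=1$; if $g(x)\notin\operatorname{supp}(f)$ then $[f_{g(x)}]=1$. Hence the integrand is supported in $\operatorname{supp}(g)\cap g^{-1}(\operatorname{supp}(f))$, a set of measure at most $\min\bigl\{\mu(\operatorname{supp}(g)),\,\mu(\operatorname{supp}(f))\bigr\}$ because $g$ preserves $\mu$.

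Finally I would translate measure of support into fragmentation norm: if $f=g_1\cdots g_n$ is an optimal fragmentation, then $\operatorname{supp}(f)\subseteq \bigcup_i \operatorname{supp}(g_i)$ and each summand has measure at most $1$, so $\mu(\operatorname{supp}(f))\le \|f\|_{\text{\sc frag}}$, and similarly for $g$. Combining everything yields
\[
\bigl|\Psi(fg)-\Psi(f)-\Psi(g)\bigr|\;\le\; D_\psi\cdot\min\bigl\{\|f\|_{\text{\sc frag}},\,\|g\|_{\text{\sc frag}}\bigr\},
\]
proving the lemma with constant $C=D_\psi$. The only mildly delicate step is verifying the cocycle identity at the level of loops based at $z$ (rather than just free loops), which is where the standing assumption on trivial evaluation is genuinely used; everything else is a measure-preserving change of variables together with subadditivity of support measure under fragmentation.
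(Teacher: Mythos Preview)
Your argument has a genuine gap at the step ``If $x\notin\operatorname{supp}(g)$ then $g_t(x)\equiv x$ and $[g_x]=1$.'' The first implication is false for a generic isotopy: $\operatorname{supp}(g)$ records only where the time-$1$ map differs from the identity, not where the whole isotopy moves points. Worse, the conclusion $[g_x]=1$ can fail even after optimizing over isotopies. The push maps $f_{\widehat\gamma}$ constructed later in the paper already furnish a counterexample: on the inner tube $\widehat\gamma(S^1\times D_1)$ the map $f_1(t,z)=(t+1,z)=(t,z)$ is the identity, so these points lie \emph{outside} $\operatorname{supp}(f_{\widehat\gamma})$, yet $[(f_{\widehat\gamma})_x]=[\gamma]\neq 1$ because the isotopy drags $x$ once around $\gamma$. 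Consequently the integrand $\psi([g_x][f_{g(x)}])-\psi([g_x])-\psi([f_{g(x)}])$ need not vanish on $M\setminus\bigl(\operatorname{supp}(g)\cap g^{-1}\operatorname{supp}(f)\bigr)$, and your measure bound by $\mu(\operatorname{supp}(g))$ does not control the error.

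The paper repairs exactly this point: it works with $\bigcup_t\operatorname{supp}(g_t)$ rather than $\operatorname{supp}(g)$, and it \emph{chooses} the isotopy to come from the fragmentation. Writing $g=g_1\cdots g_k$ with $\operatorname{supp}(g_i)\subset B_{i,g}\in\mathcal B$, Fathi's connectedness result gives isotopies $g_{i,t}$ supported in $B_{i,g}$; concatenating them yields an isotopy $g_t$ with $\bigcup_t\operatorname{supp}(g_t)\subset\bigcup_i B_{i,g}$, hence of measure at most $\|g\|_{\text{\sc frag}}$. For $x$ outside this union one genuinely has $g_t(x)\equiv x$ and thus $[g_x]=1$. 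Your bound $\mu(\operatorname{supp}(f))\le\|f\|_{\text{\sc frag}}$ is correct but insufficient; what is needed is the bound on the measure of the isotopy's support, and that requires constructing the isotopy from the fragmentation rather than picking it arbitrarily.
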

\begin{proof}
Let $f,g\in G$ and let $\{f_t\}$ and $\{g_t\}$ be isotopies from the identity
to $f=f_1$, and to $g=g_1$ respectively. 
Recall that we can take both
isotopies $f_t$ and $g_t$ in $\OP{Homeo}_0(M,\mu)$ and not in $G$. 
Denote by $\{f_t\}*\{g_t\}$ the
concatenation of $\{g_t\}$ and $\{f_t\circ g\}$, so that it is an isotopy from
the identity to $fg$.  In what follows these isotopies are used to represent
$[f_{g(x)}]$, $[g_x]$ and $[(fg)_x]$ respectively.  Let us discuss several
cases:
\begin{enumerate}[leftmargin=*]
\item 
If $x\notin \bigcup_{t\in [0,1]}\OP{supp}(g_t)$, then $[(fg)_x]=[f_{g(x)}]$ and
$[g_x]$ is the identity element of $\pi_1( M, z)$.  It follows that for all such
$x$ we have
$${\psi}([(fg)_x])-{\psi}([f_{g(x)}])-{\psi}([g_x])=0.$$
\item 
If $x\notin \bigcup_{t\in [0,1]}g^{-1}(\OP{supp}(f_t))$, then $[(fg)_x]=[g_x]$
and $[f_{g(x)}]$ is the identity element of $\pi_1( M, z)$.  It follows that for
all such $x$ we have
$${\psi}([(fg)_x])-{\psi}([f_{g(x)}])-{\psi}([g_x])=0.$$
\end{enumerate}
Thus for every 
$$x\notin \left(\bigcup_{t\in [0,1]}
\OP{supp}(g_t)\right)\bigcap\left(\bigcup_{t\in [0,1]}g^{-1}(\OP{supp}(f_t))\right)$$
we have that
$$
{\psi}([(fg)_x])-{\psi}([f_{g(x)}])-{\psi}([g_x])=0.
$$

Recall that by result of Fathi the group 
$\OP{Homeo}(B, \mu)$ is connected where $B$ is a topological ball.
Let $\|g\|_{\text{\sc frag}}=k$ and $\|f\|_{\text{\sc frag}}=m$. It follows that 
$g=g_1\circ\ldots\circ g_k$ where each $g_i$ is supported in a ball $B_{i,g}$, and
$f=f_1\circ\ldots\circ f_m$ where each $f_i$ is supported in a ball $B_{i,f}$. Let 
$g_{i,t}$ and $f_{i,t}$ be the isotopies from the identity
to $g_i$ and $f_i$ respectively, such that they are
supported in $B_{i,g}$ and $B_{i,f}$ respectively. 
These isotopies lie in $\OP{Homeo}_0(M,\mu)$.
We choose $g_t$ to be a concatenation of isotopies $g_{i,t}$, 
and $f_t$ to be a concatenation of isotopies $f_{i,t}$ respectively. Hence 
\begin{align*}
\mu\Big (\bigcup_{t\in [0,1]}\OP{supp}(g_t)\Big )
\leq\|g\|_{\text{\sc frag}}\ \ \text{and}\
\quad\mu\Big(\bigcup_{t\in [0,1]}g^{-1}(\OP{supp}(f_t))\Big)
\leq\|f\|_{\text{\sc frag}}.
\end{align*} 
Let $U=\left(\bigcup_{t\in [0,1]}
\OP{supp}(g_t)\right)\bigcap\left(\bigcup_{t\in
[0,1]}g^{-1}(\OP{supp}(f_t))\right)$.
The two inequalities above imply that 
$$\mu(U)\leq \min\{\|f\|_{\text{\sc frag}},\|g\|_{\text{\sc frag}}\}$$
and we obtain that 
\begin{align*}
\big|\Psi(fg)-\Psi(f)-\Psi(g)\big|
&\leq \int_{ M}\Big|{\psi}([(fg)_x])-{\psi}([f_{g(x)}])
-{\psi}([g_x])\Big|\mu\\
&\leq \int_{U}D_{{\psi}}\ \mu\\
&\leq D_{{\psi}}\min\{\|f\|_{\text{\sc frag}},\|g\|_{\text{\sc frag}}\},
\end{align*}
where the first inequality follows from the fact that $[(fg)_x]=[f_{g(x)}][g_x]$ 
and $D_{{\psi}}$ is the defect of the quasimorphism ${\psi}$.
This shows that $\Psi$ is a relative quasi-morphism with respect
to the fragmentation norm.
\end{proof}

\begin{lem}\label{L:homogenization}
Let $\Psi\colon G\to \B R$ be the function defined in Section \ref{SS:setup}~(4).
Its homogenization $\overline{\Psi}\colon G\to \B R$ is well defined
and invariant under conjugations.
\end{lem}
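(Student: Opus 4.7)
The plan is to establish two things: the existence of the limit $\overline{\Psi}(f)=\lim_n \Psi(f^n)/n$ for every $f\in G$, and then conjugation invariance. The main obstacle is existence, because the relative quasimorphism bound from Lemma \ref{L:psi} controls $|\Psi(f^{m+k})-\Psi(f^m)-\Psi(f^k)|$ only by $D_\psi\min(\|f^m\|_{\text{\sc frag}},\|f^k\|_{\text{\sc frag}})$, and the fragmentation norm of an iterate may well grow linearly with the exponent, so a direct appeal to the standard Fekete / quasimorphism subadditivity argument does not succeed.

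To get around this, I plan to revisit the proof of Lemma \ref{L:psi} in the special case of iterates of a single element. The defect there is actually bounded by $D_\psi\,\mu(U)$, where $U$ is the intersection of the supports of the chosen isotopies. Since $f$ is compactly supported, letting $K$ be a compact set containing $\OP{supp}(f)$ we have $f(K)=K$, so one may choose an isotopy from the identity to $f$ in $\OP{Homeo}_0(M,\mu)$ whose support lies in $K$. Concatenating $m$ copies of this isotopy yields an isotopy to $f^m$ also supported in $K$, and similarly for $f^k$. Then $U\subseteq K$, so
\[
\bigl|\Psi(f^{m+k})-\Psi(f^m)-\Psi(f^k)\bigr|\leq D_\psi\,\mu(K),
\]
a constant depending only on $f$. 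With this in hand, $n\mapsto \Psi(f^n)$ is a genuine quasimorphism on the positive integers, and Fekete's lemma applied to the subadditive sequences $\Psi(f^n)\pm D_\psi\mu(K)$ produces the limit.

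For conjugation invariance, I would apply Lemma \ref{L:psi} twice to the product $g\cdot f^n\cdot g^{-1}$, using $(gfg^{-1})^n=gf^ng^{-1}$ and the conjugation invariance of the fragmentation norm $\|g^{-1}\|_{\text{\sc frag}}=\|g\|_{\text{\sc frag}}$. Each application contributes an error at most $D_\psi\|g\|_{\text{\sc frag}}$, which is independent of $n$, so I would obtain
\[
\bigl|\Psi(gf^ng^{-1})-\Psi(f^n)\bigr|\leq 2D_\psi\|g\|_{\text{\sc frag}}+|\Psi(g)|+|\Psi(g^{-1})|.
\]
Dividing by $n$ and sending $n\to\infty$ then yields $\overline{\Psi}(gfg^{-1})=\overline{\Psi}(f)$, completing the proof.
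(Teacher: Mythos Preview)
Your argument is correct and follows essentially the same route as the paper, which simply notes that because $f$ is compactly supported one may restrict to a finite-measure subset of $M$ containing the support and the basepoint, where Polterovich's argument for closed manifolds applies verbatim; you have made that reduction explicit rather than citing it. One minor wording point: instead of first fixing an arbitrary compact $K\supseteq\OP{supp}(f)$ and then asserting that an isotopy supported in $K$ exists, it is cleaner to first take any isotopy $\{f_t\}$ in $\OP{Homeo}_0(M,\mu)$, set $K=\bigcup_t\OP{supp}(f_t)$ (compact by the standing setup), and then use $f(K)=K$ to keep the concatenated isotopies inside $K$.
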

\begin{proof}
If $M$ is a closed manifold then Polterovich proved that
$\overline{\Psi}$ is a homogeneous quasi-morphism
\cite[Section 3.7]{MR2276956}. In particular, it is
invariant under conjugation.
Since $\overline{\Psi}$ is evaluated on a compactly supported
homeomorphism, there exists a finite measure subset of $M$ containing
this support and the base-point and an argument of Polterovich implies
the statement.
\end{proof}

\begin{lem}\label{L:lipschitz}
The homogenization $\overline{\Psi}$ is Lipschitz with
respect to the fragmentation norm. More precisely,
$$
\big| \overline{\Psi}(f)\big|\leq 3\ D_{{\psi}}\ \|f\|_{\text{\sc frag}},
$$
for every $f\in G$.
\end{lem}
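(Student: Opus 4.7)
My plan is to split the desired inequality into two pieces: a bound on $|\Psi(f)|$ of order $\|f\|_{\text{\sc frag}}$, and a stabilization error $|\overline{\Psi}(f)-\Psi(f)|$ of the same order. The stabilization piece is immediate from Lemma \ref{L:psi}: applying the relative quasimorphism inequality to $f^n=f\cdot f^{n-1}$ gives $|\Psi(f^n)-\Psi(f)-\Psi(f^{n-1})|\le D_\psi\|f\|_{\text{\sc frag}}$, so telescoping yields $|\Psi(f^n)-n\Psi(f)|\le (n-1)D_\psi\|f\|_{\text{\sc frag}}$ and, dividing by $n$, $|\overline{\Psi}(f)-\Psi(f)|\le D_\psi\|f\|_{\text{\sc frag}}$. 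If $\|f\|_{\text{\sc frag}}=k$ and $f=g_1\cdots g_k$ with each $g_i$ supported in a ball, a second telescoping using Lemma \ref{L:psi} gives $|\Psi(f)-\sum_{i=1}^k\Psi(g_i)|\le (k-1)D_\psi$, so it suffices to prove the sub-lemma that $|\Psi(g)|\le D_\psi$ for every $g\in G$ supported in a single ball $B\in\C B$.

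To prove the sub-lemma, by Fathi's theorem I would choose an isotopy $\{g_t\}$ supported in $B$; trivial evaluation ensures that $[g_x]$ does not depend on this choice. For $x\notin B$ the loop $g_x$ is nullhomotopic, so attention restricts to $x\in B$. Fix once and for all a point $x_0\in B$, a path $\beta$ from the basepoint $z$ to $x_0$, and for each $x\in B$ a path $\alpha_x$ from $x_0$ to $x$ lying in $B$. Setting $\xi_y:=\gamma_y\cdot\overline{\beta\cdot\alpha_y}$ and inserting two copies of a trivial loop into $g_x$ factors its class in $\pi_1(M,z)$ as
\[
[g_x]=[\xi_x]\cdot[\omega_x]\cdot[\xi_{g(x)}]^{-1},\qquad
\omega_x=\beta\cdot\bigl(\alpha_x\cdot\{g_t(x)\}\cdot\overline{\alpha_{g(x)}}\bigr)\cdot\overline{\beta}.
\]
The inner loop of $\omega_x$ is based at $x_0$ and lies entirely in the simply connected ball $B$, so $[\omega_x]$ is trivial in $\pi_1(M,z)$. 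Because $\psi$ is a homogeneous quasimorphism with defect $D_\psi$, $\psi(h^{-1})=-\psi(h)$, and we obtain $\psi([g_x])=\psi([\xi_x])-\psi([\xi_{g(x)}])+E(x)$ with $|E(x)|\le D_\psi$.

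Integrating over $B$: since $g\colon B\to B$ is a measure-preserving bijection, the change-of-variables formula gives $\int_B\psi([\xi_{g(x)}])\,\mu=\int_B\psi([\xi_y])\,\mu$, so the two main terms cancel and $|\Psi(g)|\le D_\psi\,\mu(B)\le D_\psi$. Combining this with the two estimates above gives $|\Psi(f)|\le kD_\psi+(k-1)D_\psi=(2k-1)D_\psi$ and hence $|\overline{\Psi}(f)|\le(3k-1)D_\psi\le 3D_\psi\|f\|_{\text{\sc frag}}$, as required.

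The main obstacle is the sub-lemma: a priori $|\Psi(g)|$ could be sensitive to where the ball $B$ sits in $M$, since $B$ may be far from the basepoint and $[g_x]$ can be a very complicated element of $\pi_1(M,z)$. The resolution is that, modulo a defect-sized error at each point, $\psi([g_x])$ decomposes as a difference between a function of $x$ and the same function of $g(x)$, and measure-preservation of $g$ then kills the potentially unbounded contribution.
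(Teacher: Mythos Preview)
Your proof is correct and shares the same overall architecture as the paper's (telescope $\Psi(f)$ along the decomposition $f=g_1\cdots g_k$ using Lemma~\ref{L:psi}, and control the stabilization error $|\overline{\Psi}(f)-\Psi(f)|$ by the same telescoping), but the key generator estimate is handled by a genuinely different idea.

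The paper proves that $\overline{\Psi}(g_i)=0$ for each $g_i$ supported in a ball: it argues that $[(g_i^p)_x]$ can be represented by a loop of length bounded independently of $p$ (using Fathi's connectedness and smooth approximation of the ball), so by the Milnor--Schwarz lemma the word length of $[(g_i^p)_x]$ is bounded, hence $\psi([(g_i^p)_x])/p\to 0$. Combining this vanishing with the bound $|\overline{\Psi}(h)-\Psi(h)|\le D_\psi\|h\|_{\text{\sc frag}}$ yields the same $3D_\psi\|f\|_{\text{\sc frag}}$.

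Your route instead bounds $|\Psi(g_i)|$ directly: decomposing $[g_x]=[\xi_x][\xi_{g(x)}]^{-1}$ and using measure-preservation of $g$ on $B$ to cancel $\int_B\psi([\xi_x])\,\mu$ against $\int_B\psi([\xi_{g(x)}])\,\mu$ is an elegant trick that sidesteps the geometric length argument and Milnor--Schwarz entirely for the main step. One small point you should make explicit: to split the integral and apply change of variables you need $x\mapsto\psi([\xi_x])$ to be integrable on $B$; this follows because $\gamma_x$ has bounded length on the compact set $\overline{B}$ (setup item~(4)) and $\alpha_x$ can be homotoped into a nearby smooth ball of finite diameter as in Proposition~\ref{P:integrable}, so $[\xi_x]$ takes only finitely many values. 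With that addressed, your argument is complete and arguably more elementary than the paper's.
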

\begin{proof}
Let $f\in G$ be such that $\|f\|_{\OP{Frag}}=n$. This means that
there exist $g_1,\ldots,g_n\in G$ such that
$f=g_1\cdots g_n$ and each $g_i$ is supported in a 
ball of measure at most one. Then for $n>1$ we have
\begin{align*}
\big |\Psi(f)-\Psi(g_1)-\ldots-\Psi(g_n)\big|
&\leq \sum_{i=1}^{n-1}\Big|\Psi(g_1\cdots g_{i+1})-\Psi(g_1\cdots
g_i)-\Psi(g_{i+1})\Big|\\
&\leq\sum_{i=1}^{n-1} D_{{\psi}}<n D_{{\psi}},
\end{align*}
where the second inequality comes from the chain of inequalities at the end of
the proof of Lemma \ref{L:psi}. It follows from the adaptation of the proof
of Lemma 2.21 in \cite{MR2527432} to our case that for each $g\in G$ one has 
$$
|\overline{\Psi}(g)-\Psi(g)|
\leq D_{{\psi}}\ \mu(\OP{supp}(g))
\leq D_{{\psi}}\|g\|_{\OP{Frag}}.
$$
This leads to the following inequalities
\begin{align*}
&\quad \ |\overline{\Psi}(f)-\overline{\Psi}(g_1)-\ldots-\overline{\Psi}(g_n)|\\
&\leq |\overline{\Psi}(f)-\Psi(f)|+\sum_{i=1}^n|\overline{\Psi}(g_i)-\Psi(g_i)|+
|\Psi(f)-\Psi(g_1)-\ldots-\Psi(g_n)|\\
&\leq D_{{\psi}}\|f\|_{\OP{Frag}}+\sum_{i=1}^n D_{{\psi}}+nD_{{\psi}}=3nD_{{\psi}}.
\end{align*}
Recall that each $g_i$ is supported in a ball. 
By result of Fathi  \cite[Proposition 3.8]{MR584082} the group 
$\OP{Homeo}(B, \mu)$ is connected where $B$ is a topological ball.
It follows that there exists a measure preserving isotopy $g_{t,i}$ from
the identity to $g_i$ which is supported in a ball.
If this ball is smooth, then
for each $x\in M$ element $[(g_i^p)_x]\in\pi_1( M, z)$ can be represented
by a path whose Riemannian length is bounded by some constant independent of $p$,
see proof of Proposition \ref{P:integrable}.
If this ball is not smooth, then there exists a smooth 
ball which is $\epsilon$-homotopic to this ball, see \cite[Theorem 2.11.8]{MR1224675},
and hence the same statement holds by an argument from the proof of Proposition \ref{P:integrable}.
It follows from the Milnor-Schwarz lemma that for each $i$
\begin{equation}\label{Eq:valuation-0}
\overline{\Psi}(g_i)=
\int\limits_{ M} \lim_{p\to\infty}\frac{{\psi}([(g_i^p)_x])}{p}\mu=0.
\end{equation}
Combining \eqref{Eq:valuation-0} with previous inequalities we get
\begin{equation}\label{Eq:frag-inequality}
|\overline{\Psi}(f)|\leq 3nD_{{\psi}}
=3D_{{\psi}}\|f\|_{\text{\sc frag}}
\end{equation}
which shows that $\overline{\Psi}$ is Lipschitz with respect to the
fragmentation norm.
\end{proof}

\begin{proof}[Proof of Theorem \ref{T:general}]
Recall that the hypothesis says that there exists a non-trivial
homogeneous quasimorphism ${\psi}\colon \pi_1( M)\to\B R$ 
such that the relative quasimorphism $\overline{\Psi}$ is nontrivial. 
Take $f\in G$ such that $\overline{\Psi}(f)\neq 0$. 

Now \eqref{Eq:frag-inequality} yields
$$
\lim_{k\to\infty}\frac{\|f^k\|_{\OP{Frag}}}{k}
\geq\lim_{k\to\infty}\frac{|\overline{\Psi}(f^k)|}{3kD_{{\psi}}}
=\lim_{k\to\infty}\frac{k|\overline{\Psi}(f)|}{3kD_{{\psi}}}
=\frac{|\overline{\Psi}(f)|}{3D_{{\psi}}}>0,
$$
which proves that the fragmentation norm is stably unbounded.
\end{proof}

\section{Applications of Theorem \ref{T:general}}\label{S:applications}

\subsection{The commutator subgroup of the group of homeomorphisms}
\label{SS:homeo}
Consider $\B S^1\times \B D^{n-1}$, where $\B D^{n-1}$ is the closed
$(n-1)$-dimensional Euclidean disc of radius $1+\epsilon$, where $\epsilon >0$
is an arbitrarily small number.  Let $\varphi_s\colon \B D^{n-1}\to \B R$, for
$s\in [0,1]$ be a family of smooth functions supported away from the boundary
such that it is equal to $s$ on the disc of radius~$1$.  Let
$f_s\colon \B S^1\times \B D^{n-1}\to \B S^1\times  \B D^{n-1}$ be defined by
$f_s(t,z)=(t+\varphi_s(z),z)$; here $\B S^1 = \B R/\B Z$.
It is straightforward to verify that $f_s$ preserves
the standard product Lebesgue measure. Let $\gamma\colon \B S^1\to M$ be an
embedded loop and let $\widehat{\gamma}\colon \B S^1\times \B D^{n-1}\to M$ be
a measure preserving embedding such that $\widehat{\gamma}(t,0)=\gamma(t)$.  
Here we need to assume that with respect to the product measure the disk
$\B D^{n-1}$ has sufficiently small measure.
We define the
associated push-map $f_{\widehat{\gamma}}\colon M\to M$ by
$$
f_{\widehat{\gamma}}(x) = 
\begin{cases}
\widehat{\gamma}\circ f_1\circ\widehat{\gamma}^{-1}(x) & \text{for } x\in
\widehat{\gamma}\left( \B S^1\times \B D^{n-1} \right) \\
\OP{Id} & \text{otherwise}.
\end{cases}
$$
Changing the parameter $s$ defines an isotopy from the identity to $f_{\widehat{\gamma}}$
through measure preserving homeomorphisms. More precisely, the isotopy is defined by
$$
f_{\widehat{\gamma},s}(x) =
\begin{cases}
\widehat{\gamma}\circ f_s\circ\widehat{\gamma}^{-1}(x) & \text{for } x\in
\widehat{\gamma}\left( \B S^1\times \B D^{n-1} \right) \\
\OP{Id} & \text{otherwise}.
\end{cases}
$$
A similar construction appears in Fathi \cite{MR584082}.

Let $\B D_1\subset\B D^{n-1}$ denote the $n-1$ dimensional disc of radius $1$ centered at zero.
It follows that for each $x\in\widehat{\gamma}\left( \B S^1\times \B D_1 \right)$ we get
$[(f_{\widehat{\gamma}}^p)_x]=\gamma^p$,
where we view $\gamma$ as an element of $\pi_1(M,z)$. For 
each $x\in\widehat{\gamma}\left( \B S^1\times (\B D^{n-1}\setminus\B D_1) \right)$ we get
$$[(f_{\widehat{\gamma}}^p)_x]=\alpha_{f,x,p}\cdot\gamma^{p_{f,x}}\cdot\alpha'_{f,x,p},$$
where $|p_{f,x}|<|p|$, and the word length of $\alpha_{f,x,p}$ and $\alpha'_{f,x,p}$ is 
bounded by a constant which is independent of $p$, see e.g. \cite[Section 2.D.1]{MR3488375}.
This yields
$$\left|\overline{\Psi}(f_{\widehat{\gamma}})-\psi(\gamma)\mu(\OP{supp}(f_{\widehat{\gamma}}))\right|\leq
|\psi(\gamma)|\mu\left(\B S^1\times (\B D^{n-1}\setminus\B D_1)\right)$$
It follows that we may choose the constant $\epsilon$
suitably small so that the value $\overline{\Psi}(f_{\widehat{\gamma}})$ is arbitrarily
close to $\psi(\gamma)\mu(\OP{supp}(f_{\widehat{\gamma}}))$. 

Let $G$ be the kernel of the flux homomorphism. It has the fragmentation property
by \cite[Theorem A.6.5]{MR584082}. 
Theorem \ref{T:homeo} is a consequence of the following result.

\begin{prop}\label{P:stably-unb}
If $\psi\colon \pi_1(M)\to \B R$ is an essential quasimorphism then the
fragmentation norm on $G$ is stably unbounded.
\end{prop}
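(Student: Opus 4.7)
The plan is to invoke Theorem \ref{T:general} by producing a single element $f\in G$ with $\overline{\Psi}(f)\neq 0$. In view of the push-map computation displayed immediately before the proposition, the natural candidate is $f=f_{\widehat{\gamma}}$ for a suitably chosen framed embedding $\widehat{\gamma}\colon \B S^1\times \B D^{n-1}\to M$. I need the core loop $\gamma$ to satisfy two properties: (a) $\psi(\gamma)\neq 0$, and (b) the homeomorphism $f_{\widehat{\gamma}}$ lies in the kernel of the flux homomorphism. Granted these, taking $\epsilon$ small enough will force the displayed error $|\psi(\gamma)|\,\mu(\B S^1\times (\B D^{n-1}\setminus \B D_1))$ to be much smaller than the main term $\psi(\gamma)\,\mu(\OP{supp}(f_{\widehat{\gamma}}))$, whence $\overline{\Psi}(f_{\widehat{\gamma}})\neq 0$.

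The first substantive step is to extract from the essentiality hypothesis a \emph{null-homologous} loop $\gamma$ with $\psi(\gamma)\neq 0$. The key observation here is that a homogeneous quasimorphism on an abelian group is automatically a homomorphism: for commuting $a,b$ the quasimorphism inequality applied to $(a^n,b^n)$ and divided by $n$ gives $\psi(ab)=\psi(a)+\psi(b)$ in the limit. Hence if $\psi$ vanished on the commutator subgroup $[\pi_1(M),\pi_1(M)]$ it would descend to a homogeneous quasimorphism on $H_1(M;\B Z)$, and so to a homomorphism on $\pi_1(M)$, contradicting essentiality. Therefore some $c\in [\pi_1(M),\pi_1(M)]$ satisfies $\psi(c)\neq 0$, and any loop $\gamma$ representing $c$ is automatically null-homologous.

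The second step is to realize $\gamma$ (or a power of it, which only scales $\psi(\gamma)$) by a smoothly embedded loop with a trivial framed tubular neighborhood $\widehat{\gamma}\colon \B S^1\times \B D^{n-1}\to M$ of arbitrarily small disk-measure. In dimensions $\geq 3$ this is immediate by transversality plus triviality of any rank-$(n-1)$ bundle over $S^1$ (passing to $c^2$ in the non-orientable case). In dimension two one instead represents $a,b$ with $c=[a,b]$ by embedded simple loops and takes $\gamma$ to be the boundary curve of a regular neighborhood of their wedge, which is automatically simple. Because $[\gamma]=0$ in $H_1(M;\B R)$, the flux of $f_{\widehat{\gamma}}$, being a scalar multiple of $[\gamma]$, vanishes, so $f_{\widehat{\gamma}}\in G$.

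Once these ingredients are assembled, the estimate recorded just before the proposition gives $\overline{\Psi}(f_{\widehat{\gamma}})\neq 0$ for $\epsilon$ small enough, and Theorem~\ref{T:general} yields the stable unboundedness of the fragmentation norm on $G$. The step I expect to require the most care is the dimension-two realization of a non-trivial commutator by a simple framed loop together with the verification that the push-map really has trivial flux; both reduce to well-known facts about flux and about simple closed curves on surfaces, but they are the only pieces of the argument where the dimension matters.
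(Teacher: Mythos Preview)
Your argument is correct and takes a genuinely different route from the paper's. The paper does not try to place a single push map inside $G$; instead it picks \emph{two} embedded loops $\alpha,\beta$ with $\psi(\alpha\beta)\neq\psi(\alpha)+\psi(\beta)$, builds the push maps $f_{\widehat\alpha},f_{\widehat\beta}\in\OP{Homeo}_0(M,\mu)$ (with no flux condition imposed), and shows via the estimate that $\overline\Psi(f_{\widehat\alpha}f_{\widehat\beta})\neq\overline\Psi(f_{\widehat\alpha})+\overline\Psi(f_{\widehat\beta})$. Thus $\overline\Psi$ is not a homomorphism on $\OP{Homeo}_0(M,\mu)$; restricted to homeomorphisms supported in a fixed finite-measure set it is a genuine quasimorphism, hence nontrivial on the commutator subgroup, which sits inside $G=\ker(\text{flux})$. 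Your approach instead exploits essentiality one step earlier---via the standard fact that a homogeneous quasimorphism vanishing on $[\pi_1(M),\pi_1(M)]$ is a homomorphism---to produce a \emph{null-homologous} loop $\gamma$ with $\psi(\gamma)\neq0$, so that the single push map $f_{\widehat\gamma}$ already lies in $G$ (its mass-flow being a multiple of $[\gamma]=0$) and has $\overline\Psi(f_{\widehat\gamma})\neq0$. Your route is more direct and avoids the ``quasimorphism nontrivial on commutators'' step, at the cost of needing the flux computation for push maps and, in dimension two, the realization of a commutator-subgroup element by a simple closed curve. On that last point your sketch is slightly loose (an arbitrary $c\in[\pi_1,\pi_1]$ need not be a single commutator of simple elements), but since the proposition only assumes the \emph{existence} of an essential $\psi$, you may---exactly as the paper does in dimension two---first fix a separating simple closed curve and then choose $\psi$ nonzero on it, which resolves the issue.
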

\begin{proof}
For simplicity we denote elements of the fundamental group $\pi_1(M,z)$
and their representing loops by the same Greek letters.
Since $\psi$ is an essential quasimorphism, there exist
$\alpha,\beta\in \pi_1(M)$ such that 
$$
\big|\psi(\alpha) -\psi(\alpha\beta)+\psi(\beta)\big|=a>0.
$$
Let $\alpha,\beta\colon \B S^1\to M$ be embedded based loops representing the
above elements of the fundamental group. If $\dim M\geq 3$ then such loops
exists for all elements of the fundamental group for dimensional reasons.  If
$\dim M=2$ then if $\pi_1(M)$ admits an essential quasimorphism then $\pi_1(M)$ is
either non-abelian free or the surface group of higher genus. 
In this case $\pi_1(M)$ has abundance of essential
quasimorphisms \cite{MR1452851} and we can choose $\psi$ and embedded loops $\alpha,\beta$ which
satisfy the above requirement.

Consider the push maps
$f_{\widehat{\alpha}},f_{\widehat{\beta}}$ such that their support have equal measures and
$\overline{\Psi}(f_{\widehat{\alpha}})$ is arbitrarily close to
$\psi(\alpha)\mu(\OP{supp}(f_{\widehat{\alpha}}))$ and similarly for~$f_{\widehat{\beta}}$. 

\begin{claim}\label{L:push-homogeneous}
With the above notation we have that
$$
|\overline{\Psi}(f_{\widehat{\alpha}})
-\overline{\Psi}(f_{\widehat{\alpha}}f_{\widehat{\beta}})
+\overline{\Psi}(f_{\widehat{\beta}})| 
\geq a \mu(\OP{supp}(f_{\widehat{\alpha}})\cap\OP{supp}(f_{\widehat{\beta}}))-\delta >0,
$$
where $\delta>0$ is a constant which can be made arbitrarily small
by a suitable choice of the push maps. 
\end{claim}
The proof of the above claim is straightforward and relies on the observation
that the subsets of the supports of the push maps where they vary between the
identity and the rotation can be made arbitrarily small. 
It is similar to the proof of Lemma \ref{L:psi} and similar arguments
are presented in \cite{MR3391653, MR3181631}.

It follows that the relative quasimorphism
$\overline{\Psi}\colon \OP{Homeo}_0(M,\mu)\to \B R$ 
is nonzero and it is not a homomorphism.
Hence it must be nontrivial on the commutator subgroup $\left[\OP{Homeo}_0(M,\mu),\OP{Homeo}_0(M,\mu)\right]$
because a relative quasimorphism is a genuine quasimorphism when restricted
to homeomorphisms supported on any fixed subset of finite measure. Nontriviality
of a quasimorphism which is not a homomorphism on the commutator subgroup is straightforward.
Since $\left[\OP{Homeo}_0(M,\mu),\OP{Homeo}_0(M,\mu)\right]<G$, 
the statement follows from a direct application of Theorem \ref{T:general}.
\end{proof}

\begin{rem}
Let $\dim M=2$. Then $\left[\OP{Homeo}_0(M,\mu),\OP{Homeo}_0(M,\mu)\right]<G$, and 
it is not known whether it admits fragmentation property. However, it admits an 
induced fragmentation norm from $G$. The proof of Proposition \ref{P:stably-unb} 
shows that this norm is stably unbounded. 
\end{rem}

\subsection{The commutator subgroup of $\OP{Diff}_0(M, \mu)$} \label{SS:diff}
The fragmentation property is due to Thurston; see Banyaga \cite[Lemma
5.1.2]{MR1445290}.  The proof of Theorem \ref{T:diff} is analogous to
the above proof for homeomorphisms in the sense that the push map is obtained
from the same maps 
$$f_s\colon \B S^1\times \B D^{n-1}\to \B S^1\times \B D^{n-1}$$ 
which are transplanted to $M$ via differentiable maps. Then the
application of Proposition \ref{P:stably-unb} is the same.

\subsection{The group of Hamiltonian diffeomorphisms}\label{SS:ham}
The fragmentation property is due to Banyaga \cite[page 110]{MR1445290}.
Here the strategy is the same but the construction of the
push map needs more care. This is done as follows.

\begin{proof}[Proof of Theorem \ref{T:ham}]
Let $T=\B S^1\times [-1,1]\times \B D^{2n-2}$ be equipped with the product of an
area form on the annulus and the standard symplectic form on the disc.  The
coordinate on $\B S^1$ is denoted by $x$, on $[-1,1]$ by $y$ and a point in the
disc by $z$. So the symplectic form is $dx\wedge dy + \omega_0$.  
Let $\varphi\colon \B D^{2n-2}\to \B R$ be a non-negative function supported in the
interior of the disc and equal to $1$ on a disc of radius arbitrarily close to the
radius of $\B D^{2n-2}$. Let $f\colon [-1,1]\to \B R$ be a smooth function supported
in the interior of the interval $[-1,1]$.
Let $H\colon T\to \B R$ be defined by 
$H(x,y,z)=yf(y)\varphi(z)$.
Then 
$$dH = \varphi (f+yf') dy + yfd\varphi$$
and the corresponding Hamiltonian vector field is given by
$$
X_H(x,y,z) = \varphi(z)(f(y)+yf'(y)) \partial_x + Z(y,z),
$$
where $Z$ is a suitable vector field on the disc which depends on $y$. If 
$$
f(y)=
\begin{cases}
e^{-\frac{1}{1-y^2}} & y\in (-1,1)\\
0 &\text{ otherwise }
\end{cases}
$$ 
is the standard bell-shaped function then the equation 
$f(y) + yf'(y)=0$ has two solutions $\pm y_0 = \pm \sqrt{2-\sqrt{3}}$ 
in the interval $[-1,1]$ and we have that $f(y)+yf'(y)>0$
for $y\in (-y_0,y_0)$. We restrict the vector field $X_H$ to the subset $\B
S^1\times (-y_0,y_0)\times \B D^{2n-2}$ and extend it by zero to the rest of
$T$. Notice that this vector field is symplectic but not Hamiltonian and
it points in the non-negative direction of $\partial_x$.

The rest of the proof is the same as in the case of homeomorphisms.  That is, we
choose the classes $\alpha,\beta\in \pi_1(M)$ for which 
$$\psi(\alpha\beta)\neq \psi(\alpha)+\psi(\beta).$$ 
Then we choose their embedded
representatives and choose their small tubular neighborhoods symplectically
diffeomorphic to $T$ with possibly rescaled summands of the symplectic form
(\cite[Exercise 3.37]{MR2000g:53098}. We transplant the above symplectic flows
to create $f_{\widehat{\alpha}},f_{\widehat{\beta}}\in \OP{Symp}_0(M,\omega)$ and the same argument
shows that $\overline{\Psi}$ is nontrivial and that it is not a homomorphism and
hence it is nontrivial on the commutator subgroup of $\OP{Symp}_0(M,\omega)$ 
which is the group $\OP{Ham}(M,\omega)$ of Hamiltonian diffeomorphisms.  
\end{proof}

\subsubsection{The case of a closed symplectic manifold
$(M,\omega)$}\label{SSS:closed} Recall that if the induced evaluation map on
$\pi_1(\OP{Homeo}_0(M,\mu), 1)$ is trivial then Theorem \ref{T:ham} holds. Let
us discuss the general case without assuming the above triviality.

It follows from the proof of Arnold's conjecture that the induced evaluation
map on $\pi_1(\OP{Ham}(M,\omega),1)$ is trivial, see \cite[Exercise
11.28]{MR2000g:53098}; see also discussion in \cite{arXiv:1209.4410} about the
proof of Arnold's conjecture in full generality.  Thus we can modify the
construction of the map $$\Psi\colon \OP{Ham}(M,\omega)\to \B R$$ defined in
{\bf (7)} by taking isotopies only in $\OP{Ham}(M,\omega)$. Note that this map
is a quasimorphism, and was defined by Polterovich in \cite{MR2276956}. Its
homogenization $\overline{\Psi}$ is an essential quasimorphism whenever
$\psi\colon\pi_1(M,z)\to\B R$ is an essential quasimorphism. Thus in order to
show that the fragmentation norm is stably unbounded it is enough to show that
an essential quasimorphism
$$
\overline{\Psi}\colon \OP{Ham}(M,\omega)\to \B R
$$ 
vanishes on Hamiltonian diffeomorphisms supported in balls of volume at most
one,  see e.g. \cite[Lemma 5.2]{BM-entropy}. 

Notice that the proof of~\eqref{Eq:valuation-0} in Lemma \ref{L:lipschitz} does
not work in this case because it is not known whether a Hamiltonian
diffeomorphism of $M$ supported in a ball $B\in \C B'$ can be isotoped to the
identity through a Hamiltonian isotopy supported in $B$ (such an issue does
not occur in dimension two \cite[Corollary (2.6)]{MR1487633}).  We overcome
this problem by adjusting the definition of the fragmentation norm as follows.
We define it by
$$
\|f\|_{\text{\sc frag}} = \min\{n\in \B N\ |\ f = g_1\cdots g_n\},
$$
where ${\rm supp}(g_i)\subset B_{g_i}\in \C B'$ and $g_i\in\OP{Ham}(B_{g_i})$.
Note that in this case the fragmentation norm is well defined, since 
$\OP{Ham}(M,\omega)$ is a simple group when $M$ is closed \cite{MR1445290}.
After this modification the proof of~\eqref{Eq:valuation-0} 
in Lemma~\ref{L:lipschitz} goes through as before and so does the proof
of the version of Theorem~\ref{T:ham} for the fragmentation norm defined
as above.

\subsection{An example for a question of Burago, Ivanov and Polterovich}
One of the open problems of  \cite{MR2509711} asks for an example of a group
which is perfect, has stably vanishing commutator length and admits a stably
positive conjugation invariant norm. The following example, has been suggested
by an anonymous referee.

Consider the group $H=\OP{Ham}(T^*\Sigma_g\times \B R^{2n})$, where $\Sigma_g$ is
a closed surface of genus $g>1$. The commutator subgroup of $H$ is perfect and
has stably vanishing commutator length (every compact subset of $T^*\Sigma_g\times \B R^{2n}$
 is displaceable by a compactly supported Hamiltonian isotopy
which lies in the commutator subgroup). Observe that the fragmentation norm on
$H$ restricted to the commutator subgroup $G=[H,H]$ is still stably unbounded
because the relative quasimorphism $\overline{\Psi}$ does not vanish on the
commutator subgroup. So this restriction yields a required example for
the Burago, Ivanov, Polterovich problem.

\bibliography{bibliography}
\bibliographystyle{plain}

\end{document}